\DeclareMathOperator{\ex}{\mathbb{E}}
\DeclareMathOperator{\prob}{\mathbb{P}}
\DeclareMathOperator{\tr}{\mathsf{Tr}}
\DeclarePairedDelimiterXPP{\dnorm}[1]{}{\lVert}{\rVert}{_{2}}{#1}
\DeclarePairedDelimiterX{\braket}[2]{\langle}{\rangle}{#1\mathopen{}\delimsize,\mathopen{}#2}
\DeclarePairedDelimiterX{\inner}[2]{\langle}{\rangle}{#1,#2}
\DeclarePairedDelimiterX{\setdef}[2]{\{}{\}}{#1:#2}
\DeclarePairedDelimiterXPP{\exclude}[1]{\mathopen{}\setminus}{\{}{\}}{}{#1}
\DeclarePairedDelimiterXPP{\probof}[1]{\prob}{(}{)}{}{%
	
	#1}
\DeclarePairedDelimiterXPP{\exof}[1]{\ex}{[}{]}{}{%
	
	#1}
\DeclarePairedDelimiterXPP{\trof}[1]{\tr}{[}{]}{}{#1}
\newtheorem{lem}{Lemma}
\newtheorem{thm}{Theorem}
\newtheorem{rem}{Remark}
\newtheorem{defn}{Definition}
\numberwithin{equation}{section}
\numberwithin{theorem}{section}
\numberwithin{remark}{section}
\numberwithin{example}{section}
\title{Comments on Efficient Singular \\Value Thresholding Computation}
\author{
Zhengyuan Zhou\thanks{Stern School of Business, New York University. Email: zzhou@stern.nyu.edu.}
\and Yi Ma\thanks{UC Berkeley EECS. Email: yima@eecs.berkeley.edu}
}
\date{}
\begin{document}
\maketitle
\begin{abstract}
We discuss how to evaluate the proximal operator of a convex and increasing function of a nuclear norm, which forms the key computational step in several first-order optimization algorithms such as (accelerated) proximal gradient descent and ADMM.
Various special cases of the problem arise in low-rank matrix completion, dropout training in deep learning and high-order low-rank tensor recovery, although they have all been solved on a case-by-case basis. We provide an unified and efficiently computable procedure for solving this problem. 
\end{abstract}

\section{Problem, Notation and Background}
Proximal gradient descent (and its accelerated variant) (\cite{beck2009gradient, combettes2011proximal, ma2012alternating, parikh2014proximal}) provide an efficient way (with $O(\frac{1}{T})$ and $O(\frac{1}{T^2})$ convergence rates, respectively) to solve structured 
convex non-smooth optimizaiton problems of the following form, which arise frequently in machine learning and structured signal recovery settings:
$$\min_{x} F(x) = g(x) + h(x),$$
where $g(x)$ is convex and smooth (i.e. continuously differentiable with bounded gradients) and $h(x)$ is convex but non-smooth. The presence of $h$ often results from non-smooth but convex regularizers such as $l_1$-norm or nuclear norm, as two prominent examples.
The computational bottleneck in each iteration of (accelerated) proximal gradient is evaluating the proximal operator (at a point $w$):
$$\text{prox}_{h}[w] = \arg\min_{x} \{h(x) + \frac{1}{2} \|x-w\|_2^2 \},$$
which admits a unique solution due to strong convexity. Computationally, efficient (accelerated) proximal gradient is possible when and only when $\text{prox}_{h}[w] $ can be computed efficiently. 
Note that in addition to (accelerated) proximal gradient, evaluating proximal operators is also the key and computationally demanding step in methods such as ADMM (\cite{luo2012linear, yang2013linearized,journals/ftml/BoydPCPE11}) for large-scale equality constrained optimization problems.

In this note, we focus on a class of non-smooth convex functions $h: \mathbf{R}^{n_1\times n_2} \rightarrow \mathbf{R}$, where $h(X) = f(\|X\|_*)$ for some convex and increasing $f: \mathbf{R} \rightarrow \mathbf{R}$, with $\|X\|_*$ being the nuclear norm of $X$. Thus, our main question is, how to efficient compute the optimal solution $X^*$, where:
\begin{equation}\label{m1}
X^* = \arg\min \{ \tau f(\|X\|_*) + \frac{1}{2} \|X - Y\|_F^2\}.
\end{equation}

It turns out a few special cases of this problem have played important roles in machine learning and signal processing. For instance, when $f(x) = x$, this problem occurs in low-rank matrix completion and recovery problems (\cite{cai2010singular}); when $f(x) = x^2$, this problem occurs in drop-out training in deep learning (\cite{cavazza2018dropout}); when $f(x) = e^x$, this problem occurs in high-order low-rank tensor recovery (\cite{zhang2014hybrid}). Further, analytical and/or efficiently computable solutions have been derived in these special cases. For instance, consider a matrix $Y\in\mathbf{R}^{n_1 \times n_2}$ of rank $r$, whose singular value decomposition (SVD) is:
$$Y = U \Sigma V^*, \Sigma = \mathrm{diag}(\{\sigma^i_Y\}_{1\le i \le r}), \sigma^1_Y \ge \sigma^2_Y \ge \dots \ge \sigma^r_Y > 0,$$ where $U$ and $V$ are $n_1 \times r$
and $n_2 \times r$ matrices respectively with orthonormal columns. 
For each $\tau > 0$, the soft thresholding shrinkage operator $\mathcal{D}_\tau$ is defined to be:
$$\mathcal{D}_\tau(Y) = U\mathcal{D}_\tau(\Sigma)V^*, \ \mathcal{D}_\tau(\Sigma) = \mathrm{diag}({(\sigma^i_Y - \tau)_+}),t_+ = \max(0,t).$$
\cite{cai2010singular} shows that soft thresholding provides an analytical solution when $f(x) = x$.
\begin{lem}\label{candes} [\cite{cai2010singular}]
	Given a matrix $Y\in\mathbf{R}^{n_1 \times n_2}$ and a $\tau \ge 0$, consider the function $h(X) = \tau \|X\|_* + \frac{1}{2}\|X-Y\|^2_F, \ X\in\mathbf{R}^{n_1 \times n_2}.$
	We have $\mathrm{arg}\min_{X}{h(X)} = \mathcal{D}_\tau(Y)$.
\end{lem}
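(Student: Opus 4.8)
The plan is to exploit strict convexity together with the first-order optimality condition for convex functions. Since $X \mapsto \tfrac{1}{2}\|X-Y\|_F^2$ is strongly convex and $X \mapsto \tau\|X\|_*$ is convex, the objective $h$ is strongly, hence strictly, convex, so it admits a \emph{unique} minimizer $X^*$, characterized by the inclusion $0 \in \partial h(X^*)$. Because the Frobenius term is differentiable with gradient $X - Y$, this inclusion is equivalent to
$$ Y - X^* \in \tau\, \partial\|X^*\|_*. $$
Thus it suffices to show that the candidate $X^* = \mathcal{D}_\tau(Y)$ satisfies this single membership condition; uniqueness then forces it to be the minimizer, which is exactly the claim.

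The key technical ingredient is the characterization of the subdifferential of the nuclear norm. For a matrix $X$ of rank $r$ with thin SVD $X = U_X \Sigma_X V_X^*$, one has
$$ \partial\|X\|_* = \{\, U_X V_X^* + W : U_X^* W = 0,\ W V_X = 0,\ \|W\|_{\mathrm{op}} \le 1 \,\}, $$
where $\|\cdot\|_{\mathrm{op}}$ denotes the spectral norm. I would either cite this standard fact or derive it from the duality between the nuclear and spectral norms, identifying the subgradients of $\|\cdot\|_*$ at $X$ as the spectral-ball elements $Z$ achieving $\langle Z, X\rangle = \|X\|_*$, with the equality analyzed through von Neumann's trace inequality.

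With this in hand, the verification is a direct computation. Writing the SVD of $Y$ and splitting the singular values according to whether they exceed $\tau$, say $Y = U_0 \Sigma_0 V_0^* + U_1 \Sigma_1 V_1^*$ with $\Sigma_0 \succ \tau I$ and $\Sigma_1 \preccurlyeq \tau I$, the shrinkage operator gives $X^* = U_0(\Sigma_0 - \tau I)V_0^*$, so $U_0, V_0$ are precisely the singular vectors of $X^*$. A short calculation yields
$$ Y - X^* = \tau\, U_0 V_0^* + U_1 \Sigma_1 V_1^*, $$
hence $\tfrac{1}{\tau}(Y - X^*) = U_0 V_0^* + W$ with $W = \tfrac{1}{\tau} U_1 \Sigma_1 V_1^*$. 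I would then check the three conditions: $U_0^* W = 0$ and $W V_0 = 0$ follow from the mutual orthogonality of the columns of $U$ and of $V$, while $\|W\|_{\mathrm{op}} = \tfrac{1}{\tau}\|\Sigma_1\|_{\mathrm{op}} \le 1$ since every retained singular value in $\Sigma_1$ is at most $\tau$. This places $Y - X^*$ in $\tau\,\partial\|X^*\|_*$, completing the argument; the degenerate case $\tau = 0$ is immediate because then $\mathcal{D}_0(Y) = Y$ already minimizes the Frobenius term.

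The main obstacle is the subdifferential characterization of the nuclear norm: once it is available, the remainder reduces to orthogonality bookkeeping and a single spectral-norm bound. For a self-contained proof, establishing that formula — in particular that the off-support part $W$ ranges over exactly the spectral ball orthogonal to the singular subspaces of $X$ — is the step requiring the most care, as it rests on von Neumann's trace inequality and its equality conditions.
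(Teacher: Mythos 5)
Your proposal is correct and follows essentially the same route as the paper: the paper cites Lemma~\ref{candes} from \cite{cai2010singular} without reproving it, but its proof of Theorem~\ref{main} is exactly your argument specialized at $g = f' \equiv 1$ and threshold $t_{j^*} = \tau$ --- strict convexity for uniqueness, the subdifferential characterization of the nuclear norm, the split $Y = U_a\Sigma_a V_a^* + U_b\Sigma_b V_b^*$ by whether singular values exceed the threshold, and the verification of the stationarity condition with $W = \tfrac{1}{\tau}U_b\Sigma_b V_b^*$. Your handling of the degenerate cases (including $X^* = 0$, where the paper's proof of Theorem~\ref{main} chooses $W = \tau^{-1}Y$ just as your uniform decomposition yields) is consistent with the paper's treatment.
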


However, although the proximal mappings of these different cases have been solved separately,
there is a lack of an unified and efficiently computable scheme to solve the above problem for a general $f$ (one should not expect an analytical solution exists for a general $f$).
It turns out that soft singular value thresholding still works and the threshold can be computed efficiently by a binary search on a system of 1-dimensional equations that depend on the input data matrix $Y$ and $f(\cdot)$.

\section{Main Results}

We augment the list of non-increasing singular values $\{\sigma^i_Y\}_{1\le i \le r}$ of $Y$ with $\sigma^{r+1}_Y = -\infty$.
\begin{lem}\label{existence}
	Let $g: \mathbf{R} \rightarrow \mathbf{R}$ be such that $g(x) \ge 0$,     $g$ is increasing and $g(0) \le 1$.
	Suppose the rank of $Y$ is $r$ and $\tau < \sigma^1_Y$. There exists a unique integer $j$, with $1\le j \le r$, such that
	the solution $t_j$ to the following equation
	\begin{equation}\label{algebraicEquation_g}
		g(\sum_{i=1}^{j}{\sigma^i_Y} - jt_j) = \frac{t_j}{\tau}
	\end{equation}
	satisfies the constraint
	\begin{equation}\label{constraint_g}
		\sigma^{j+1}_Y \le t_j < \sigma^j_Y.
	\end{equation}
\end{lem}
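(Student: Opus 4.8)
The plan is to reduce the two–variable existence/uniqueness statement (over the index $j$ and the scalar $t_j$) to the existence of a single root of one continuous, strictly decreasing function. For each $j$ with $1 \le j \le r$ I would write $S_j = \sum_{i=1}^{j} \sigma^i_Y$ and set
\[
\phi_j(t) \;=\; \tau\, g\!\left(S_j - j t\right) - t .
\]
Since $\tau > 0$, $j \ge 1$, and $g$ is increasing, the map $t \mapsto \tau g(S_j - j t)$ is non-increasing while $t \mapsto -t$ is strictly decreasing; hence $\phi_j$ is strictly decreasing, and so it has at most one zero, which is exactly the solution $t_j$ of \eqref{algebraicEquation_g}. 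Thus for each $j$ the solution, when it exists, is automatically unique, and the real content of the lemma is to show that this zero lands in the half-open interval $[\sigma^{j+1}_Y, \sigma^j_Y)$ of \eqref{constraint_g} for exactly one $j$.

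Next I would exploit that these intervals tile the relevant range. Because $\sigma^1_Y \ge \cdots \ge \sigma^r_Y > 0$ and $\sigma^{r+1}_Y = -\infty$, the (possibly empty, when singular values repeat) intervals $[\sigma^{j+1}_Y, \sigma^j_Y)$ for $j = 1, \dots, r$ are disjoint and their union is $(-\infty, \sigma^1_Y)$. The key algebraic observation is a matching identity at the shared endpoints: since $S_{j+1} = S_j + \sigma^{j+1}_Y$, one has $S_{j+1} - (j+1)\sigma^{j+1}_Y = S_j - j\,\sigma^{j+1}_Y$, so the arguments fed to $g$ in $\phi_{j+1}$ and $\phi_j$ agree at $t = \sigma^{j+1}_Y$, whence $\phi_{j+1}(\sigma^{j+1}_Y) = \phi_j(\sigma^{j+1}_Y)$. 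This lets me glue the pieces: defining $\Phi(t) = \phi_{j}(t)$ for $t \in [\sigma^{j+1}_Y, \sigma^j_Y)$ produces a single function on $(-\infty, \sigma^1_Y)$ that is strictly decreasing (each $\phi_j$ is, and the values agree across the breakpoints $\sigma^{2}_Y, \dots, \sigma^{r}_Y$) and continuous.

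With $\Phi$ in hand I would finish by the intermediate value theorem. As $t \to -\infty$ we are on the branch $\phi_r$, and since $g \ge 0$ we get $\Phi(t) \ge -t \to +\infty$. At the right end, on the branch $\phi_1$ we have $\Phi(t) = \tau g(\sigma^1_Y - t) - t$, so as $t \uparrow \sigma^1_Y$ the argument $\sigma^1_Y - t \downarrow 0$ and $\Phi(t) \to \tau g(0) - \sigma^1_Y \le \tau - \sigma^1_Y < 0$, using $g(0) \le 1$ together with the hypothesis $\tau < \sigma^1_Y$. Being continuous and strictly decreasing from $+\infty$ to a negative value, $\Phi$ has exactly one zero $t^\ast \in (-\infty, \sigma^1_Y)$. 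That zero lies in precisely one tile $[\sigma^{j+1}_Y, \sigma^j_Y)$; calling its index $j$, we have $\phi_j(t^\ast) = \Phi(t^\ast) = 0$ with $t^\ast \in [\sigma^{j+1}_Y, \sigma^j_Y)$, i.e.\ $t_j = t^\ast$ solves \eqref{algebraicEquation_g} and satisfies \eqref{constraint_g}. Uniqueness of $j$ is then immediate, for any index $j'$ meeting both conditions yields a zero $t_{j'}$ of $\Phi$ inside its own tile, and since $\Phi$ has a single zero and the tiles are disjoint, $j' = j$.

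The one point needing care — and the step I expect to be the main obstacle — is the continuity of $\Phi$. Within each tile, continuity of $\phi_j$ already requires $g$ to be continuous, and across breakpoints one additionally needs the matching identity so that the one-sided limits coincide; both ingredients are exactly what the intermediate value theorem consumes in order to produce an \emph{exact} root rather than merely a sign change. Continuity of $g$ is not literally listed among the stated hypotheses, but it holds in all the target applications (where $g$ is built from the derivative of a convex, increasing $f$ such as $x$, $x^2$, or $e^x$); if one only assumes $g$ increasing, the same argument still pins down the unique tile, but the zero would have to be interpreted through the monotone structure at a jump.
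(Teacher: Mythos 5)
Your proof is correct, but it takes a genuinely different route from the paper. The paper argues in two separate stages: uniqueness via a minimal-counterexample argument (take the smallest $j^*$ in the candidate set and derive $t_{j^*+k} > t_{j^*}$ from a chain of monotonicity inequalities, contradicting $t_{j^*} \ge \sigma^{j^*+1}_Y > t_{j^*+k}$), and existence via induction on $j$, building a strictly increasing sequence $t_1 < t_2 < \cdots$ with $t_j < \sigma^j_Y$ until some $t_j$ lands in $[\sigma^{j+1}_Y, \sigma^j_Y)$, with the sentinel $\sigma^{r+1}_Y = -\infty$ guaranteeing termination at $j = r$. You instead observe the endpoint-matching identity $\phi_{j+1}(\sigma^{j+1}_Y) = \phi_j(\sigma^{j+1}_Y)$ and glue the per-index functions into a single continuous, strictly decreasing $\Phi$ on $(-\infty, \sigma^1_Y)$, getting existence \emph{and} uniqueness in one stroke from the intermediate value theorem and strict monotonicity. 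Your argument is shorter and makes the role of each hypothesis transparent ($g \ge 0$ forces $\Phi \to +\infty$ on the left; $g(0) \le 1$ and $\tau < \sigma^1_Y$ force $\Phi < 0$ near $\sigma^1_Y$; monotonicity of $g$ gives strict decrease). What the paper's more laborious induction buys is a set of byproducts that Lemma~\ref{comp} explicitly cites to justify the binary search in Algorithm~\ref{alg:thresholdComputation}: that $t_j$ increases strictly before reaching $j^*$ with $t_j < \sigma^j_Y$, and that $t_k \ge \sigma^k_Y$ for $k > j^*$. These facts also fall out of your framework — the sign of $\Phi$ at the tile endpoints $\sigma^{j+1}_Y$ determines on which side of its tile each $t_j$ falls — but if your proof were to replace the paper's, that corollary would need to be spelled out for the algorithm's correctness proof to survive.

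Your closing caveat about continuity of $g$ is well taken and is not a defect of your proof relative to the paper's: the paper's own proof silently assumes it at the step asserting that ``by the property of $g$, a unique solution $t_j$ exists'' for each scalar equation, which fails for an increasing $g$ with a jump (e.g., $g(0) = 1$ but $g(0^+)$ large can leave \eqref{algebraicEquation_g} with no solution satisfying \eqref{constraint_g}, so the lemma as literally stated needs continuity too). In the intended application $g = f'$ is the increasing derivative of a differentiable convex function, hence continuous by the Darboux property, so the gap is in the lemma's hypotheses rather than in either argument.
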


\begin{proof}
	We first show that if at least one such $j$ exists, then such a $j$ (and hence $t_j$) is unique.
	Consider the set
	$J = \{j \mid t_j \text{ satisfies \eqref{algebraicEquation_g} and \eqref{constraint_g}}\}.$
	Assume $J \neq \emptyset$, let $j^*$ be the smallest element in $J$. Now we argue that no $j^*+k$, $1\le k \le r-j^*$, can be in $J$.
	Consider any $k$ with $1\le k \le r-j^*$. Suppose for contradiction $j^*+k \in J$. That is:
	
	\begin{align}
	\label{eq1_g}
	g(\sum_{i=1}^{j^*+k}{\sigma^i_Y} - (j^*+k)t_{j^*+k}) = \frac{t_{j^*+k}}{\tau},\\
	\label{eq2_g}
	\sigma^{j^*+k+1}_Y \le t_{j^*+k} < \sigma^{j^*+k}_Y.
	\end{align}
	
	Expanding on the right side of (\ref{eq1_g}), we have
	\begin{align*}
		&g(\sum_{i=1}^{j^*+k}{\sigma^i_Y} - (j^*+k)t_{j^*+k}) 
		\ge
		g(\sum_{i=1}^{j^*}{\sigma^i_Y} + k\sigma^{j^*+k}_Y - (j^*+k)t_{j^*+k}) \\
		&=  g(\sum_{i=1}^{j^*}{\sigma^i_Y} - j^*t_{j^*+k} + k(\sigma^{j^*+k}_Y - t_{j^*+k}))
	> g(\sum_{i=1}^{j^*}{\sigma^i_Y} - j^*t_{j^*+k})\\
		&> g(\sum_{i=1}^{j^*}{\sigma^i_Y} - j^*t_{j^*}) = \frac{t_{j^*}}{\tau},
	\end{align*}
	where the first inequality follows from the non-increasing values of the
	singular values, the second inequality follows from the assumption in (\ref{eq2_g}) and that $g$ is increasing, the last inequality follows from $t_{j^*} \ge \sigma^{j^*+1}_Y \ge \sigma^{j^*+k}_Y > t_{j^*+k}$ and the last equality follows from the definition of $t_{j^*}$.
	
	Hence, it follows that 
	$$\frac{t_{j^*+k}}{\tau} = g(\sum_{i=1}^{j^*+k}{\sigma^i_Y} - (j^*+k)t_{j^*+k}) > \frac{t_{j^*}}{\tau}$$, leading to
	$t_{j^*+k} > t_{j^*}$, hence a contradiction.

	Next, we prove that $J$ is indeed not empty.
	
	First, we note that by the property of $g$, a unique solution $t_j$ ($> 0$) exists for
	$g(\sum_{i=1}^{j}{\sigma^i_Y} - jt_j) = \frac{t_j}{\tau}$, for each $j$ satisfying $1\le j \le r$.
	We denote by $t_j$ the unique solution corresponding to each $j$. Hence, it suffices to show at least one $t_j$
	satisfies $\sigma^{j+1}_Y \le t_j < \sigma^j_Y$.
	
	Again by monotonicity of $g$ and $g(0) \le 1$, it is easily seen that
	$\tau < \sigma^1_Y$ implies that $t_1 < \sigma^{1}_Y$.
	Now suppose it also holds that $\sigma^{2}_Y \le t_1$, then we are done.
	Otherwise, we have $t_1 < \sigma^{2}_Y$.
	Under this assumption, we claim that $t_1 < t_2$ and $t_2 < \sigma^{2}_Y$.
	To prove $t_1 < t_2$, assume for the sake of contradiction that $t_1 \ge t_2$, leading to:
	\begin{eqnarray*}
		&& \frac{t_1}{\tau} =g(\sigma^1_Y - t_1) < g(\sigma^1_Y + \sigma^2_Y - 2t_1)\\
		&\le& g(\sigma^1_Y + \sigma^2_Y - 2t_2) = \frac{t_2}{\tau},
	\end{eqnarray*}
	where the first inequality follows from $t_1 < \sigma^2_Y$.
	Hence we reach a contradiction, establishing that $t_1 < t_2$. 
	
	The desired inequality $t_2 < \sigma^{2}_Y$ then follows since 
	\begin{eqnarray*}
		&& g(\sigma^1_Y - t_1) = \frac{t_1}{\tau} < \frac{t_2}{\tau} < g(\sigma^1_Y + \sigma^2_Y - t_1 - t_2)\\
	\end{eqnarray*}
	implies $\sigma^1_Y - t_1 < \sigma^1_Y + \sigma^2_Y - t_1 - t_2$
	by monotonicity of $g$, hence yielding $t_2 < \sigma^2_Y$

	If $t_2 \ge \sigma^{3}_Y$, then the claim is established.
	If not, we can repeat this process inductively.
	More formally, suppose we have just finished the $j$-th iteration (note that the induction basis $j = 1$ is verified above) and we have $t_j < \sigma^{j}_Y$.
	If it also holds that $t_j \ge \sigma^{j+1}_Y$, then the claim follows. If not, then we show $t_{j+1} > t_j$ and $t_{j+1} < \sigma^{j+1}_Y$.
	First, assume on the contrary, $t_{j+1} \le t_j$
	\begin{eqnarray*}
		&& \frac{t_j}{\tau} = g(\sum_{i=1}^{j}{\sigma^i_Y} - jt_j) < g(\sum_{i=1}^{j+1}{\sigma^i_Y} - (j+1)t_{j+1}) = \frac{t_{j+1}}{\tau}\\
	\end{eqnarray*}
	where the first inequality follows from $t_j < \sigma^j_Y$.
	Hence we reach a contradiction, establishing that $t_j < t_{j+1}$.
	
	Next, we note that $t_{j+1} < \sigma^{j+1}_Y$ follows since
	\begin{eqnarray*}
		&& g(\sum_{i=1}^{j}{\sigma^i_Y} - jt_j) = \frac{t_j}{\tau}  <  \frac{t_{j+1}}{\tau} < g(\sum_{i=1}^{j+1}{\sigma^i_Y} - jt_{j} - t_{j+1}), \\
	\end{eqnarray*}
	(where the last inequality follows due to $jt_j < jt_{j+1}$,) implying $\sum_{i=1}^{j}{\sigma^i_Y} - jt_j < \sum_{i=1}^{j+1}{\sigma^i_Y} - jt_{j} - t_{j+1}$, which is equivalent to $t_{j+1} < \sigma^{j+1}_Y$.

	Thus, we have a strictly increasing sequence $\{t_j\}$ with $t_j < \sigma^{i}_Y$.
	If it holds that $\sigma^{j+1}_Y \le t_j < \sigma^{j}_Y$ at some iteration $j$, then such a $j$ certifies that $J$ is not empty.
	If $\sigma^{j+1}_Y \le t_j < \sigma^{j}_Y$ never holds for $j$ up to $r-1$, then it must hold for $j = r$, since $-\infty = \sigma^{r+1}_Y \le t_r < \sigma^{r}_Y$, also certifying that $J$ is not empty.
\end{proof}

\begin{rem}
	In addition to asserting the unique existence of such a $j^*$, the proof suggests a natural binary search algorithm to find such a $j^*$ and the corresponding $t_{j^*}$.
	The algorithm is given in Algorithm~\ref{alg:thresholdComputation}.
	Note that the step ``Compute $t_{j}$" can be easily done very efficiently by numerically solving $g(\sum_{i=1}^{j}{\sigma^i_Y} - jt_j) = \frac{t_j}{\tau}$, even though there may not be any analytical solution. 
\end{rem}

\begin{lem}\label{comp}
	Algorithm~\ref{alg:thresholdComputation} correctly computes the unique $j$ and $t_j$ guaranteed by Lemma~\ref{existence}. 
\end{lem}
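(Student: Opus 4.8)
The plan is to show that the binary search in Algorithm~\ref{alg:thresholdComputation} is driven by a predicate that is monotone in the index $j$, so that each comparison correctly reports whether the sought index $j^*$ lies below, at, or above the current probe. Write $j^*$ for the unique index guaranteed by Lemma~\ref{existence}, and for each $j$ let $t_j$ denote the unique positive root of \eqref{algebraicEquation_g}. At a probe index $j$ the algorithm computes $t_j$ and performs the three-way comparison of $t_j$ against $\sigma^{j+1}_Y$ and $\sigma^j_Y$; the crux is to certify that these comparisons locate $j^*$. Concretely, I would prove the two equivalences
\[
t_j < \sigma^{j+1}_Y \iff j < j^*, \qquad t_j \ge \sigma^j_Y \iff j > j^*,
\]
from which the remaining case $\sigma^{j+1}_Y \le t_j < \sigma^j_Y$ is equivalent to $j = j^*$.

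The forward direction of the first equivalence is essentially the ``building-up'' step already carried out inside the proof of Lemma~\ref{existence}: whenever $t_j < \sigma^{j+1}_Y$ one gets $t_j < \sigma^j_Y$ for free (since $\sigma^{j+1}_Y \le \sigma^j_Y$), and the induction there produces $t_{j+1} > t_j$ with $t_{j+1} < \sigma^{j+1}_Y$, so a valid index can only occur strictly later; hence $t_j < \sigma^{j+1}_Y$ forces $j < j^*$. Conversely, for every $j < j^*$ the same induction shows the constraint \eqref{constraint_g} has not yet been met on its lower side, i.e. $t_j < \sigma^{j+1}_Y$.

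For the second equivalence I would argue by elimination. If $j > j^*$, then by the uniqueness part of Lemma~\ref{existence} the constraint \eqref{constraint_g} fails at $j$, so either $t_j \ge \sigma^j_Y$ or $t_j < \sigma^{j+1}_Y$; but the first equivalence already excludes $t_j < \sigma^{j+1}_Y$ whenever $j \ge j^*$, leaving $t_j \ge \sigma^j_Y$. The reverse direction is the contrapositive of the observation that $j \le j^*$ implies $t_j < \sigma^j_Y$, which holds because $t_{j^*} < \sigma^{j^*}_Y$ by definition of $j^*$ and $t_j < \sigma^{j+1}_Y \le \sigma^j_Y$ for $j < j^*$.

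With these equivalences in hand, correctness and termination follow from the standard binary-search invariant: the set $\{j : j < j^*\}$ is exactly where the probe reports ``too small'' and $\{j : j > j^*\}$ is exactly where it reports ``too large,'' so maintaining an interval $[\mathrm{lo},\mathrm{hi}]$ that contains $j^*$ and discarding the appropriate half at each probe shrinks the interval geometrically and returns $j^*$ (together with $t_{j^*}$) after $O(\log r)$ evaluations of \eqref{algebraicEquation_g}. I expect the main obstacle to be the second equivalence: one must rule out that any probe strictly above $j^*$ ever reports ``too small,'' which cannot be seen from a purely local comparison and instead requires the global monotonicity of the sequence $\{t_j\}$ — precisely the place where the monotonicity machinery developed in the proof of Lemma~\ref{existence} must be invoked in full.
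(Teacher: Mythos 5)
You take essentially the same route as the paper: its proof likewise extracts from the uniqueness half of Lemma~\ref{existence} that any probe with $M > j^*$ yields $t_M \ge \sigma^M_Y$, and from the inductive (existence) half that any probe with $M < j^*$ yields $t_M < \sigma^M_Y$ (indeed $t_M < \sigma^{M+1}_Y$), then concludes via the invariant that $j^*$ always lies in $[L,R]$. Your write-up is merely a bit more explicit in packaging this as two monotone equivalences (deriving the $j > j^*$ case by elimination rather than by re-reading the uniqueness argument), but both proofs rest on the same trichotomy and the same standard binary-search reasoning.
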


\begin{proof}
	From the first part of proof for Lemma \ref{existence}, we know that if $j^*$ is the unique $j$ guaranteed by Lemma~\ref{existence}, then for all $k > j^*$, we have $t_k \ge \sigma^{k}_Y$.
	Thus, if $t_M < \sigma^{M}_Y$, then we know that $j^*$ cannot be less than M. That is, $j^*$ must be in the second half of the unsearched space.
	Conversely, if we hypothetically do a sequential search, then it follows immediately from the second part of proof of Lemma~\ref{existence} that before $j$ reaches $j^*$,
	$t_M < \sigma^{M}_Y$ must hold. This establishes that if in the while loop we encounter $t_M \ge \sigma^{M}_Y$, then it must be the case that $j^* \le M$. That is, $j^*$ must lie in the first part of the unsearched space.
	It then follows that $j^*$ always lies between $L$ and $R$, establishing that while loop will eventually halt, returning $t_{j^*}$ and $j^*$.
	
	%If $t_M < \sigma^{M}_Y$ and $\sigma^{M+1}_Y \le t_{M}$, then we have found the right $t_M$.
	%If $t_M < \sigma^{M}_Y$ and $\sigma^{M+1}_Y \le t_{M}$
\end{proof}

\begin{algorithm}[tb]
	\caption{Generalized Singular Value Threshold Computation}
	\label{alg:thresholdComputation}
	\begin{algorithmic}
		\STATE {\bfseries Input:} $Y$, $\tau$, $\{\sigma^{i}_Y\}_{1\le i \le r}$
		\STATE {\bfseries Output:} $j^*$ and $t_{j^*}$
		\newline
		
		\STATE {\textbf{if} $\tau \ge \sigma^{1}_Y$, Return $0$ and $\sigma^{1}_Y$}

		\STATE Initialize $L = 1, R = r$,
		\WHILE{$L \le R$}
		\STATE $M = \lceil{\frac{L+R}{2}}\rceil$, and compute the solution $t_{M}$ to the equation $g(\sum_{i=1}^{M}{\sigma^i_Y} - Mt_M) = \frac{t_M}{\tau}$ 
		\IF{$t_M < \sigma^{M}_Y$}
		\STATE{\textbf{if}$\sigma^{M+1}_Y \le t_{M}$, Return  $M$ and $t_{M}$}
		\STATE{\textbf{else} $L = M$,Continue}
		\ENDIF
		\STATE {\textbf{Else} $R = M$, Continue}	
		\ENDWHILE
	\end{algorithmic}
\end{algorithm}

\vspace{-5ex}
%We now proceed to define a {\em generalized singular value thresholding operator} $\mathcal{H}_\tau$:
\begin{defn}
	Given $\tau >0$, the generalized singular value thresholding operator $\mathcal{H}_\tau$ is define to be:
	$$\mathcal{H}_\tau(Y) = U\mathcal{D}_{t_{j^*}}(\Sigma)V^*, Y = U\Sigma V^* \in \mathbf{R}^{n_1 \times n_2},$$ where $t_{j^*}$ is the threshold computed by Algorithm~\ref{alg:thresholdComputation}.
\end{defn}

Lemma~\ref{existence} guarantees that $\mathcal{H}_\tau$ is well-defined and Algorithm~\ref{alg:thresholdComputation} guarantees that $\mathcal{H}_\tau$ is efficiently computable. Having defined $\mathcal{H}_\tau$, the main result is:

\begin{thm}\label{main}
	Let $f : \mathbf{R} \rightarrow \mathbf{R}$ be any convex, increasing, differentible function, with an increasing derivative satisfying 
	$f'(0) \le 1$. 
	Given a $\tau > 0$ and a $Y \in \mathbf{R}^{n_1 \times n_2}$, we have:
	$$\mathcal{H}_\tau(Y) = \arg\min_{X}\{\tau f({\|X\|_*}) + \frac{1}{2} \|X - Y\|^2_F\}.$$
\end{thm}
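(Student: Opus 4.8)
The plan is to verify the first-order (subgradient) optimality condition at the candidate matrix $X^{*} = \mathcal{H}_\tau(Y) = U\mathcal{D}_{t_{j^*}}(\Sigma)V^{*}$, exactly as in the classical argument behind Lemma~\ref{candes}, but with the fixed threshold $\tau$ there replaced by the data-dependent threshold $t := t_{j^*}$. First I would record that the objective $\Phi(X) := \tau f(\|X\|_*) + \tfrac12\|X-Y\|_F^2$ is strongly convex (the quadratic term is $1$-strongly convex and $f(\|\cdot\|_*)$ is convex because $f$ is convex and increasing and $\|\cdot\|_*$ is a norm), so it has a unique minimizer and it suffices to exhibit $0 \in \partial\Phi(X^{*})$. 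Then I would invoke Lemma~\ref{existence} with $g = f'$: the hypotheses on $f$ translate precisely into those on $g$ (namely $f$ increasing gives $f'\ge 0$, the increasing derivative gives $f'$ increasing, and $f'(0)\le 1$ gives $g(0)\le 1$), so for $\tau<\sigma^1_Y$ the lemma yields a unique pair $(j^*,t)$ with $\sigma^{j^*+1}_Y \le t < \sigma^{j^*}_Y$ solving \eqref{algebraicEquation_g}. The single identity I will need is that, writing $\|X^{*}\|_* = \sum_{i=1}^{j^*}(\sigma^i_Y - t) = \sum_{i=1}^{j^*}\sigma^i_Y - j^*t$, equation \eqref{algebraicEquation_g} says exactly $f'(\|X^{*}\|_*) = t/\tau$, i.e. $\tau f'(\|X^{*}\|_*) = t$.

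Next I would compute $\partial\Phi(X^{*})$. The gradient of the quadratic part is $X^{*}-Y$, and since $f$ is differentiable with $f'\ge 0$ the subdifferential chain rule gives $\partial\big(f(\|\cdot\|_*)\big)(X^{*}) = f'(\|X^{*}\|_*)\,\partial\|\cdot\|_*(X^{*})$. Using the displayed identity, optimality $0\in\partial\Phi(X^{*})$ reduces to showing that $(Y-X^{*})/t$ lies in $\partial\|\cdot\|_*(X^{*})$. I would then use the standard description of the nuclear-norm subdifferential at a matrix with compact SVD $X^{*} = P\,\mathrm{diag}(x)\,Q^{*}$ (here $P = [u_1,\dots,u_{j^*}]$, $Q=[v_1,\dots,v_{j^*}]$, $x_i = \sigma^i_Y - t>0$): it consists of $PQ^{*}+W$ with $P^{*}W=0$, $WQ=0$, and $\|W\|_{\mathrm{op}}\le 1$, where $\|\cdot\|_{\mathrm{op}}$ is the spectral norm dual to $\|\cdot\|_*$. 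Writing $Y - X^{*} = t\,PQ^{*} + \sum_{i>j^*}\sigma^i_Y\,u_i v_i^{*}$ and setting $W := \tfrac1t\sum_{i>j^*}\sigma^i_Y\,u_i v_i^{*}$, orthonormality of the $u_i,v_i$ gives $P^{*}W=0$ and $WQ=0$ immediately, while $\|W\|_{\mathrm{op}} = \sigma^{j^*+1}_Y/t \le 1$ is exactly the left inequality of the constraint \eqref{constraint_g}. Hence $(Y-X^{*})/t = PQ^{*}+W \in \partial\|\cdot\|_*(X^{*})$, which certifies $0\in\partial\Phi(X^{*})$ and, by strong convexity, identifies $X^{*}=\mathcal{H}_\tau(Y)$ as the unique minimizer.

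I expect the main obstacle to be purely in the subdifferential bookkeeping rather than in any genuine analytic difficulty: one must justify the chain rule for $f(\|\cdot\|_*)$ (valid because $f$ is nondecreasing and differentiable at $\|X^{*}\|_*$) and invoke the correct characterization of $\partial\|\cdot\|_*$. The conceptual crux---and the only place the specific structure of the problem enters---is the observation that the fixed-point equation \eqref{algebraicEquation_g} is precisely what forces $\tau f'(\|X^{*}\|_*)=t$, so that the generalized problem collapses onto the soft-thresholding certificate behind Lemma~\ref{candes} with threshold $t$. The complementary regime $\tau\ge\sigma^1_Y$, where Lemma~\ref{existence} does not apply and Algorithm~\ref{alg:thresholdComputation} returns the zero matrix, would be handled by a direct check of the inclusion $0\in\partial\Phi(0)=\tau f'(0)\{\,Z:\|Z\|_{\mathrm{op}}\le 1\,\}-Y$ at the origin. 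An alternative to the whole argument would be to first use von Neumann's trace inequality to show the minimizer must share the singular vectors of $Y$, thereby reducing \eqref{m1} to the separable convex program $\min_{x_1\ge\cdots\ge 0}\ \tau f(\sum_i x_i)+\tfrac12\sum_i(x_i-\sigma^i_Y)^2$ whose KKT conditions reproduce \eqref{algebraicEquation_g}--\eqref{constraint_g}; I would keep the subgradient route as the primary one since it is self-contained and mirrors Lemma~\ref{candes} most closely.
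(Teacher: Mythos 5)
Your proposal is, step for step, the paper's own proof: strict convexity of the objective gives uniqueness, Lemma~\ref{existence} applied with $g=f'$ yields the key identity $t_{j^*}=\tau f'(\|\hat{X}\|_*)$ via $\|\hat{X}\|_*=\sum_{i=1}^{j^*}(\sigma^i_Y-t_{j^*})$, and your optimality certificate is the identical choice $W=\frac{1}{t_{j^*}}U_b\Sigma_b V_b^*$ with $\|W\|_2=\sigma^{j^*+1}_Y/t_{j^*}\le 1$ supplied by the left inequality of \eqref{constraint_g}. The boundary regime $\tau\ge\sigma^1_Y$ is likewise handled in both by a direct subgradient check at $\hat{X}=0$ (your explicit inclusion $Y\in\tau f'(0)\{Z:\|Z\|_2\le 1\}$ is the condition implicit in the paper's choice $W=\tau^{-1}Y$), and your sketched von Neumann reduction is merely an unpursued alternative, so there is no methodological divergence to report.
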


\begin{proof}
	To prove this theorem, we build on the techniques introduced in \cite{cai2010singular}.
	
	The function $h(X) = \tau f(\|X\|_*) + \frac{1}{2}\|X-Y\|^2_F$ is strictly convex, since it is the sum of a convex function and a strictly convex function.
	As a result, the minimizer $\hat{X}$ to $h(X)$ is unique and it suffices to show that $\mathcal{H}_\tau(Y)$ is one minimizer.
	
	Per the definition of a subgradient, $S$ is a subgradient of a convex function $f$ at $X_0$ if
	$f(X) \ge f(X_0) + \langle S, X - X_0 \rangle$.
	$\partial f(X_0)$ is commonly used to denote the set of subgradients of $f$ at $X_0$.
	Recall that the set $\partial \|X_0\|_*$ of subgradients of the nuclear norm function at $X_0$ is:
	$\partial \|X_0\|_* = \{U_{X_0}V^*_{X_0} + W \mid W\in \mathbf{R}^{n_1 \times n_2}, 
	U^*_{X_0}W = 0, WV_{X_0} = 0, \|W\|_2 \le 1\},$ where the SVD of $X_0$ is $U_{X_0}\Sigma_{X_0}V^*_{X_0}$ and $\|W\|_2$ is the top singular value of $W$.
	
	First, it is easy to check that
	$f'({\|X_0\|_*})(U_{X_0}V^*_{X_0} + W) \in \partial (f({\|X_0\|_*}))$ for $W\in \mathbf{R}^{n_1 \times n_2}, U^*_{X_0}W = 0, WV_{X_0} = 0, \|W\|_2 \le 1$, by the composition rule for the subgradient. 
	%$e^{\|X_0\|_*}(U_{X_0}V^*_{X_0} + W) \in \partial (e^{\|X_0\|_*})$ for $W\in \mathbf{R}^{n_1 \times n_2}, U^*_{X_0}W = 0, WV_{X_0} = 0, \|W\|_2 \le 1.$
	%To see this, note that since $\|X\|_* \ge \|X_0\|_* + \langle U_{X_0}V^*_{X_0} + W, X - X_0 \rangle$,
	%we have, by raising both sides to the power of $e$, the following chain of inequalities:
	%\begin{eqnarray*}
	%&&e^{\|X\|_*} \ge e^{\|X_0\|_*}e^{\langle U_{X_0}V^*_{X_0} + W, X - X_0 \rangle} \\
	%&\ge& e^{\|X_0\|_*}(\langle U_{X_0}V^*_{X_0} + W, X - X_0 \rangle + 1)  \\
	%%e^{\|X_0\|_*} +  e^{\|X_0\|_*}(<U_{X_0}V^*_{X_0} + W, X - X_0>) = \\
	%&=& e^{\|X_0\|_*} +  \langle e^{\|X_0\|_*}(U_{X_0}V^*_{X_0} + W), X - X_0 \rangle,
	%\end{eqnarray*}
	%where the second inequality follows from $e^x \ge x + 1, \forall x \in \mathbf{R}$.
	Hence, $\tau g({\|\hat{X}\|_*})(U_{\hat{X}}V^*_{\hat{X}} + W) + \hat{X} - Y$ is a subgradient for $h$ at $\hat{X}$,
	for $W$ satisfying $U^*_{\hat{X}}W = 0, WV_{\hat{X}} = 0, \|W\|_2 \le 1$, where $g = f'$ and $g$ satisfies the assumption given in Lemma~\ref{existence}.
	Moreover, if there exists such a $W$ and it holds that $0 = \tau g({\|\hat{X}\|_*})(U_{\hat{X}}V^*_{\hat{X}} + W) + \hat{X} - Y$, or
	equivalently that
	\begin{equation}\label{optimality}
	Y- \hat{X} = \tau g({\|\hat{X}\|_*})(U_{\hat{X}}V^*_{\hat{X}} + W),
	\end{equation}
	then $\hat{X}$ is a minimizer (hence the unique minimizer) to $h(X)$.
	%If $\tau \ge \sigma^{j+1}_Y$, then since
	%$$\tau e^{\|X\|_*} + \frac{1}{2} \|X - Y\|^2_F$$
	
	We now establish that, with $\hat{X} = \mathcal{H}_\tau(Y)$, Eq.~(\ref{optimality}) does hold with $W$ satisfying the given constraints.
	First, we consider the case that $\tau < \sigma^1_Y$.
	
	By Lemma~\ref{existence}, since $t_{j^*}$ satisfies the equation 
	$\frac{t_{j^*}}{\tau} = g({\sum_{i=1}^{j^*}\sigma^i_Y} - j^*t_{j^*})$, we have
	$t_{j^*} = \tau g({\sum_{i=1}^{j^*}{(\sigma^i_Y} - t_{j^*})})$.
	Since $\sigma^{j^*+1}_Y \le t_{j^*} < \sigma^{j^*}_Y$, $Y$'s last $r-j^*$ singular values ($\sigma^{k}_Y$ with $k \ge j^*+1$) have been set to 0, leading to that $\|\hat{X}\|_* = \sum_{i=1}^{j^*}{(\sigma^i_Y - t_{j^*})}$.
	Therefore, $t_{j^*} = \tau g({\|\hat{X}\|_*})$.
	
	Next, we partition $Y$ as follows:
	$$Y = U_a \Sigma_a V_a^* + U_b \Sigma_b V_b^*,$$
	where $U_a$'s columns and $V_a$'s columns are the first $j^*$ left and right singular vectors respectively, associated with the first $j^*$ singular values (i.e. singular values larger than $t_{j^*}$), while
	$U_b$'s columns and $V_b$'s colunms are the remaining $r - j^*$ left and right singular vectors respectively, associated with the remaining $r - j^*$ singular values (i.e. singular values less than or equal to $t_{j^*}$).
	
	Under this partition, it is easily seen that
	$\hat{X} =  U_a (\Sigma_a - t_{j^*} \mathbf{I}) V_a^*.$
	We then have
	\begin{eqnarray*}
		&&Y - \hat{X} = t_{j^*}U_aV_a^* + U_b \Sigma_b V_b^*  \\
		&=& \tau g({\|\hat{X}\|_*})U_aV_a^* + U_b \Sigma_b V_b^*  \\
		&=& \tau g({\|\hat{X}\|_*})(U_aV_a^* + \frac{1}{t_{j^*}}U_b \Sigma_b V_b^*).
	\end{eqnarray*}
	Choose $W = \frac{1}{t_{j^*}}U_b \Sigma_b V_b^*$. By construction,
	$U_a^*U_b = 0, V_b^*V_a =0$, hence we have $U^*_{\hat{X}}W = 0, WV_{\hat{X}} = 0$.
	In addition, since $t_{j^*} \ge \sigma^{j*+1}_Y$, we have $\|W\|_2 = \frac{\sigma^{j*+1}_Y}{t_{j^*}} \le 1$.
	Hence $W$ thus chosen satisfies the constraints, hence establishing the claim.
	
	Now, if $\tau \ge \sigma^1_Y$, then $t_{j^*} = \sigma^1_Y$ and $j^* = 1$ are returned by Algorithm~\ref{alg:thresholdComputation}.
	It follows immediately that in this case $\hat{X} = 0$. Choosing $W = \tau^{-1}Y$,
	it is easily seen that $W$ satisfies the constraints. Verification of Eq.~\eqref{optimality} is instant when $\hat{X} = 0$ and $W = \tau^{-1}Y$.
\end{proof}

%\section{Future Work}
%Several interesting future directions suggest themselves immediately.
%First, when the underlying matrices have other structures (e.g.\ sparsity), does the method generalize to the corresponding class of  regularizers ($l_1$ norm)? (My conjecture is that it should be, given the intimate connections between the two norms.) 
%Second, what other penalty functions (other than the Frobenius norm) play well with the nuclear norms regularizer?
%Finally, now that we have the efficient computation result, what are the regret bounds?

\bibliographystyle{apalike}
\bibliography{references_note}

\begin{thebibliography}{}

\bibitem[Beck and Teboulle, 2009]{beck2009gradient}
Beck, A. and Teboulle, M. (2009).
\newblock Gradient-based algorithms with applications to signal recovery.
\newblock {\em Convex optimization in signal processing and communications},
  pages 42--88.

\bibitem[Boyd et~al., 2011]{journals/ftml/BoydPCPE11}
Boyd, S.~P., Parikh, N., Chu, E., Peleato, B., and Eckstein, J. (2011).
\newblock Distributed optimization and statistical learning via the alternating
  direction method of multipliers.
\newblock {\em Foundations and Trends in Machine Learning}, 3(1):1--122.

\bibitem[Cai et~al., 2010]{cai2010singular}
Cai, J., Cand{\`e}s, E., and Shen, Z. (2010).
\newblock A singular value thresholding algorithm for matrix completion.
\newblock {\em SIAM Journal on Optimization}, 20(4):1956--1982.

\bibitem[Cavazza et~al., 2018]{cavazza2018dropout}
Cavazza, J., Morerio, P., Haeffele, B., Lane, C., Murino, V., and Vidal, R.
  (2018).
\newblock Dropout as a low-rank regularizer for matrix factorization.
\newblock In {\em International Conference on Artificial Intelligence and
  Statistics}, pages 435--444. PMLR.

\bibitem[Combettes and Pesquet, 2011]{combettes2011proximal}
Combettes, P.~L. and Pesquet, J.-C. (2011).
\newblock Proximal splitting methods in signal processing.
\newblock In {\em Fixed-point algorithms for inverse problems in science and
  engineering}, pages 185--212. Springer.

\bibitem[Luo, 2012]{luo2012linear}
Luo, Z. (2012).
\newblock On the linear convergence of the alternating direction method of
  multipliers.
\newblock {\em arXiv preprint arXiv:1208.3922}.

\bibitem[Ma, 2012]{ma2012alternating}
Ma, S. (2012).
\newblock Alternating proximal gradient method for convex minimization.
\newblock {\em Preprint of Optimization Online}.

\bibitem[Parikh and Boyd, 2014]{parikh2014proximal}
Parikh, N. and Boyd, S. (2014).
\newblock Proximal algorithms.
\newblock {\em Foundations and Trends in optimization}, 1(3):127--239.

\bibitem[Yang and Yuan, 2013]{yang2013linearized}
Yang, J. and Yuan, X. (2013).
\newblock Linearized augmented lagrangian and alternating direction methods for
  nuclear norm minimization.
\newblock {\em Mathematics of Computation}, 82(281):301--329.

\bibitem[Zhang et~al., 2014]{zhang2014hybrid}
Zhang, X., Zhou, Z., Wang, D., and Ma, Y. (2014).
\newblock Hybrid singular value thresholding for tensor completion.
\newblock In {\em Twenty-Eighth AAAI Conference on Artificial Intelligence}.
  Citeseer.

\end{thebibliography}

\end{document}